\documentclass[12pt]{article} 

\usepackage{amsmath}
\usepackage{amssymb}
\usepackage{amsthm}
\usepackage[all,cmtip]{xy}
\usepackage{fancyhdr}
\usepackage{euscript}
\usepackage{setspace}
\usepackage{graphicx,epsfig}
\usepackage{palatino, url, multicol}
\usepackage{verbatim}    
\usepackage{mathptmx}

\setlength{\topmargin}{-.2in}
\setlength{\headsep}{.20in}
\setlength{\oddsidemargin}{.0in}
\setlength{\evensidemargin}{.0in}
\setlength{\textwidth}{6.5in}
\setlength{\textheight}{9.in}

\pdfpagewidth 8.5in
\pdfpageheight 11in


\newcommand{\tab}{\hspace{1pc}}

\newcommand{\R}{\hbox{$\mathbb{R}$}}
\newcommand{\N}{\hbox{$\mathbb{N}$}}

\newcommand{\Q}{\hbox{$\mathbb{Q}$}}

\newcommand{\abs}[1]{\hbox{$\left| {#1} \right|$}}








\newtheoremstyle%
{custom}%
{}
{}
{}
{}
{}
{.}
{ }
{\thmname{}
\thmnumber{}%
\thmnote{\bfseries #3}}%

\newtheoremstyle%
{Theorem}%
{}%
{}%
{\itshape}%
{}%
{}%
{.}%
{ }%
{\thmname{\bfseries #1}%
\thmnumber{\;\bfseries #2}%
\thmnote{\;(\bfseries #3)}}%

\theoremstyle{Theorem}
\newtheorem{thm}{Theorem}
\newtheorem{cor}[thm]{Corollary}
\newtheorem{lemma}[thm]{Lemma}
\newtheorem{prop}[thm]{Proposition}

\theoremstyle{definition}

\theoremstyle{remark}

\theoremstyle{custom}

\lhead{}
\chead{\textit{Symmetry in the sequence of approximation coefficients}}
\rhead{A. Bourla}
\lfoot{}
\cfoot{\thepage}
\rfoot{}

\pagestyle{fancy}

\begin{document} 

\title{Symmetry in the Sequence of Approximation Coefficients}
 \author{Avraham Bourla\\
   Department of Mathematics\\
   Saint Mary's College of Maryland\\
   Saint Mary's City, MD, 20686\\
   \texttt{abourla@smcm.edu}}
 \date{\today}
 \maketitle

\begin{abstract}
\noindent Let $\{a_n\}_1^\infty$ and $\{\theta_n\}_0^\infty$ be the sequences of partial quotients and approximation coefficients for the continued fraction expansion of an irrational number. We will provide a function $f$ such that $a_{n+1} = f(\theta_{n\pm1},\theta_n)$. In tandem with a formula due to Dajani and Kraaikamp, we will write $\theta_{n \pm 1}$ as a function of $(\theta_{n \mp 1}, \theta_n)$, revealing an elegant symmetry in this classical sequence and allowing for its recovery from a pair of consecutive terms.   
\end{abstract}

\section{Introduction}{}

\noindent Given an irrational number $r$ and a rational number written as the unique quotient $\frac{p}{q}$ of the two integers $p$ and $q$ with $\gcd(p,q)=1$ and $q>0$, our fundamental object of interest from diophantine approximation is the {\bf approximation coefficient} $\theta(r,\frac{p}{q}) := q^2\abs{r-\frac{p}{q}}$. Small approximation coefficients suggest high quality approximations, combining accuracy with simplicity. For instance, the error in approximating $\pi$ using the fraction $\frac{355}{113}=3.14159203539823008849557522124$ is smaller than the error of its decimal expansion to the fifth digit $3.14159 = \frac{314159}{100000}$. Since the former rational also has a much smaller denominator, it is of far greater quality than the latter. Indeed $\theta\big(\pi,\frac{355}{113}\big)< 0.0341$ whereas $\theta\big(\pi,\frac{314159}{100000}\big)> 26535$.\\

\noindent We obtain the high quality approximations for $r$ by using the euclidean algorithm to write $r$ as an infinite continued fraction:
\[r =  a_0 + [a_1,a_2,...] := a_0 + \dfrac{1}{a_1 +\dfrac{1}{a_2 + ...}},\]
where the {\bf partial quotients} $a_0 = a_0(r) \in \mathbb{Z}$ and $a_n = a_n(r) \in \mathbb{N} := \mathbb{Z} \cap [1,\infty)$ for all $n \ge 1$, are uniquely determined by $r$. This expansion also provides us with the infinite sequence of rational numbers 
\[\frac{p_0}{q_0} := \frac{a_0}{1}, \tab \frac{p_n}{q_n} := a_0 + [a_1,...,a_n], \tab n \ge 1,\]
tending to $r$ known as the {\bf convergents} of $r$. Define the approximation coefficient of the n$^{\operatorname{th}}$ convergent of $r$ by 
\[\theta_n := \theta\bigg(r,\frac{p_n}{q_n}\bigg) = q_n^2\abs{r - \frac{p_n}{q_n}}\] 
and refer to the sequence $\{\theta_n\}_0^\infty$ as the {\bf sequence of approximation coefficients}. Since adding an integer to a fraction does not change its denominator, the number $x_0 := r - a_0$ shares the same sequences $\{a_n\}_1^\infty$ and $\{\theta_n\}_0^\infty$ as $r$, allowing us to restrict our attention solely to the unit interval. Throughout this paper, we fix an initial seed $x_0 \in (0,1) -\Q$ and let $\{a_n\}_1^\infty$ and $\{\theta_n\}_0^\infty$ be its sequences of partial quotients and approximation coefficients. While the rest of this section is not a prerequisite, the following results illustrate some of the key properties for this classical sequence and are given for motivation as well as for sake of completeness.\\  

\noindent For all $n \ge 0$, it is well known \cite[Theorem 4.6]{Burger} that $\abs{x_0 - \frac{p_n}{q_n}} < \frac{1}{q_n{q_{n+1}}} < \frac{1}{q_n^2}$. We conclude that $\theta_n < 1$ for all $n \ge 0$. Conversely, Legendre \cite[Theorem 5.12]{Burger} proved that if $\theta(x_0,\frac{p}{q}) < \frac{1}{2}$ then $\frac{p}{q}$ is a convergent of $x_0$. In 1891, Hurwitz proved that there exist infinitely many pairs of integers $p$ and $q$, such that $\theta(x_0,\frac{p}{q}) < \frac{1}{\sqrt{5}} \approx 0.4472$ and that this constant, known as the {\bf Hurwitz Constant}, is sharp. Therefore, all irrational numbers possess infinitely many high quality approximations using rational numbers, whose associated approximation coefficients are less than $\frac{1}{\sqrt{5}}$. Using Legendre's result, we see that all these high quality approximations must belong to the sequence of continued fraction convergents for $x_0$.\\ 

\noindent We may restate Hurwitz's theorem as the sharp inequality $\displaystyle{\liminf_{n\to\infty}}\{\theta_n\}$ $\le \frac{1}{\sqrt{5}}$. In general, we use the value of $\displaystyle{\liminf_{n \to \infty}}\{\theta_n\}$ to measure how well can $x_0$ be approximated by rational numbers. The set of values taken by $\displaystyle{\liminf_{n \to \infty}}\big\{\theta_n(x_0)\big\}$, as $x_0$ varies in the set of all irrational numbers in the interval, is called the {\bf Lagrange Spectrum} and those irrational numbers $x_0$ which construe the spectrum, that is, for which $\displaystyle{\liminf_{n \to \infty}}\big\{\theta_n(x_0)\big\} > 0$ are called {\bf badly approximable numbers}. It is known \cite[Theorem 7.3]{Burger} that $x_0$ is badly approximable if and only its sequence of partial quotients $\{a_n\}_1^\infty$ is bounded. For more details about the Lagrange Spectrum, refer to \cite{CF}.\\  
   
 \noindent In 1895, Vahlen \cite[Corollary 5.1.13]{DK} proved that for all $n \ge 1$ we have the sharp inequality
\begin{equation}\label{Vahlen}
\min\{\theta_{n-1},\theta_n\} < \frac{1}{2}
\end{equation} 
and in 1903, Borel \cite[Theorem 5.1.5]{DK} proved the sharp inequality $\min\{\theta_{n-1},\theta_n,\theta_{n+1}\} < 5^{-.5}$. More recent improvements include the sharp inequalities $\min\big\{\theta_{n-1},\theta_n,\theta_{n+1}\big\} < (a_{n+1}^2 + 4)^{-.5}$ and $\max\big\{\theta_{n-1},\theta_n,\theta_{n+1}\big\} > (a_{n+1}^2 + 4)^{-.5}$, due to Bagemihl and McLaughlin \cite{BM} and Tong \cite{Tong}. Therefore, this sequence exhibits a bounding symmetry on a triple of consecutive terms, which stems from its internal connection with the sequence of partial quotient.\\ 

\noindent For instance, we write
\[\pi - 3 = \dfrac{1}{7 + \dfrac{1}{15+ \dfrac{1}{1+ \dfrac{1}{292 + ...}}}} = [7,15,1,292,1,1,1,2,1,3,1,14,2,...].\]
The first ten convergents $\big\{\frac{p_n}{q_n}\big\}_0^9$ are 
\[\bigg\{\frac{0}{1}, \frac{1}{7},\frac{15}{106},\frac{16}{113}, \frac{4687}{33102}, \frac{4703}{33215}, \frac{9390}{66317}, \frac{14093}{99532}, \frac{37576}{265381}, \frac{51669}{364913} \bigg\}\] 
and the best upper bounds for $\{\theta_n\}_0^9$ using a four digit decimal expansion are 
\[\{0.1416, 0.0612, 0.9351, 0.0034, 0.6237, 0.3641, 0.5363, 0.2885, 0.6045, 0.2134\}.\] 
In particular, the small approximation coefficient $\theta_3=0.0034$ helps explain why the rational number $\frac{355}{113} = 3 + \frac{16}{113}$, first discovered by Archimedes (c. 287BC - c. 212BC), was a popular approximation for $\pi$ throughout antiquity.\\

\section{Preliminary results}

\noindent In 1921, Perron \cite{Perron} proved that
\begin{equation}\label{Perron_classic}
\dfrac{1}{\theta_{n-1}} = [a_{n+1},a_{n+2},...] + a_n + [a_{n-1},a_{n-2},...,a_1], \hspace{1pc} n \ge 1,
\end{equation}
where we take $[\emptyset] := 0$ when $n=1$. Thus, as far as the flow of information goes, the entire sequence of partial quotients is needed in order to generate a single member in the sequence of approximation coefficients. In 1978, Jurkat and Peyerimhoff \cite{JP} showed that for all irrational numbers and for all $n \ge 1$, the point $(\theta_{n-1},\theta_n)$ lies in the interior of the triangle with vertices $(0,0), (0,1)$ and $(1,0)$. As a result, we have 
\begin{equation}\label{max=1}
\theta_{n-1} + \theta_n < 1, 
\end{equation}
which is an improvement of Vahlen's result \eqref{Vahlen}. In addition, they proved that $a_{n+1}$ can be written as a function of $(\theta_{n-1},\theta_n)$ but came short of providing a simple expression, which applies to all cases. Combining this observation with the pair of symmetric identities 
\[\theta_{n+1} = \theta_{n-1} + a_{n+1}\sqrt{1 - 4\theta_{n-1}\theta_n} - a_{n+1}^2\theta_n, \hspace{1pc} n \ge 1\]
and
\[\theta_{n-1} = \theta_{n+1} + a_{n+1}\sqrt{1 - 4\theta_{n+1}\theta_n} - a_{n+1}^2\theta_n, \hspace{1pc} n \ge 1,\]
due to Dajani and Kraaikamp \cite[proposition 5.3.6]{DK}, allows us to recover the tail of the sequence of approximation coefficients from a pair of consecutive terms.\\ 

\noindent We abbreviate the last two equations to the single working formula
\begin{equation}\label{theta_n_pm_1}
\theta_{n\pm1} = \theta_{n\mp1} + a_{n+1}\sqrt{1 - 4\theta_{n\mp1}\theta_n} - a_{n+1}^2\theta_n, \hspace{1pc} n \ge 1. 
\end{equation}
Our goal, obtained in Theorem \ref{result}, is to provide a real valued function $f$ such that $a_{n+1} = f(\theta_{n\pm1},\theta_n)$. This will enable us, as expressed in Corollary \ref{result_cor}, to eliminate $a_{n+1}$ from formula \eqref{theta_n_pm_1} without disrupting its elegant symmetry. This will enable us to recover the {\it entire} sequence $\{\theta_n\}_0^\infty$ from a pair of consecutive terms.\\

\section{Symbolic dynamics}

\noindent The continued fraction expansion is a symbolic representation of irrational numbers in the unit interval as an infinite sequence of positive integers. Let $\lfloor \cdot \rfloor$ be the {\bf floor} function, whose value on a real number $r$ is the largest integer smaller than or equal to $r$. Then we obtain this expansion for the initial seed $x_0 \in (0,1) - \Q$ by using the following infinite iteration process: 
\begin{enumerate}
\item Let $n :=1$.
\item Set  the {\bf reminder} of $x_0$ at time $n$ to be $r_n := \frac{1}{x_{n-1}} \in (1,\infty)$. 
\item Define the {\bf digit} and {\bf future} of $x_0$ at time $n$ to be the integer part and fractional part of $r_n$ respectively, that is, $a_n := \lfloor r_n \rfloor \in \N$ and $x_n := r_n - a_n \in (0,1) - \Q$. Increase $n$ by one and go to step 2. 
\end{enumerate}
Using this iteration scheme, we obtain
\[x_0 = \frac{1}{r_1} = \dfrac{1}{a_1 + x_1} = \dfrac{1}{a_1 + \dfrac{1}{r_2}} = \dfrac{1}{a_1 + \dfrac{1}{a_2 + x_2}} = \dfrac{1}{a_1 + \dfrac{1}{a_2 + \frac{1}{r_3}}} = ... \]
hence, the quantity $a_n$ is no other than the $n^{\operatorname{th}}$ partial quotient of $x_0$. We relabel $a_n$ as the digit for $x_0$ at time $n$ in order to emphasis the underlying dynamical structure at hand and write
\begin{equation}\label{r}
x_0 = [r_1] = [a_1,r_2] = [a_1,a_2,r_3] = ...
\end{equation} 
The quantity $x_n = r_n - a_n$ is the value of $x_{n-1}$ under the {\bf Gauss Map} 
\begin{equation}\label{T}
T: \big((0,1) - \Q\big) \to \big((0,1) - \Q\big), \hspace{1pc} T(x) := \frac{1}{x} - \bigg\lfloor \frac{1}{x} \bigg\rfloor.
\end{equation}
This map is realized as a left shift operator on the set of infinite sequences of digits, i.e.
\[ [a_n,a_{n+1},r_{n+2}] = x_{n-1} \overset{T}\mapsto x_n =  [a_{n+1},r_{n+2}], \hspace{1pc} n \ge 1.\]
We preserve the $n$ digits that the map $T^n$ erases from this symbolic representation of $x_0$ by defining the {\bf past} of $x_0$ at time $n \ge 1$ to be 
\begin{equation}\label{y_n}
y_n := -a_n -  [a_{n-1},a_{n-2},...,a_1] < -1.
\end{equation} 
The {\bf natural extension map} 
\[\mathcal{T}(x,y) := \bigg(\frac{1}{x} - \bigg\lfloor \frac{1}{x} \bigg\rfloor, \tab \frac{1}{y} - \bigg\lfloor \frac{1}{x} \bigg\rfloor \bigg) = \bigg(T(x), \tab \frac{1}{y} - \bigg\lfloor \frac{1}{x} \bigg\rfloor \bigg),\]
is well defined whenever $x$ is an irrational number and $y<-1$, providing us with the relationship
\[(x_{n+1},y_{n+1}) = \mathcal{T}(x_n,y_n), \hspace{1pc} n \ge 1.\]
Since $x_n$ is uniquely determined by $\{a_n\}_{n+1}^\infty$ and $y_n$ is uniquely determined by $\{a_n\}_1^n$, this map can be thought of as one tick of the clock in the symbolic representation of $x_0$ using the sequence $\{a_n\}_1^\infty$
\[ [[a_1,a_2,...,a_n | a_{n+1}, a_{n+2} ...]] \overset{\mathcal{T}}\mapsto [[a_1,a_2,...,a_n, a_{n+1} | a_{n+2}, ...]], \]  
advancing the present time denoted by $|$ one step into the future. 

\section{Dynamic pairs vs. Jager Pairs}

\noindent Using the dynamical terminology of the last section, we restate Perron's result \eqref{Perron_classic} as
\begin{equation}\label{Perron}
\theta_{n-1} = \frac{1}{x_n - y_n}, \hspace{1pc} n \ge 1. 
\end{equation}
Define the region $\Omega := (0,1) \times (-\infty,-1) \subset \R^2$ and the map
\begin{equation}\label{Psi} 
\Psi: \Omega \to \R^2, \hspace{1pc} \Psi(x,y) := \bigg(\dfrac{1}{x-y}, -\dfrac{x{y}}{x-y}\bigg),
\end{equation}
which is clearly well-defined and continuous. Since for all $n \ge 1$, we have $(x_n,y_n) \in \Omega$, we use formulas \eqref{T} and \eqref{Perron} to obtain  
 \[\dfrac{1}{\theta_n} = x_{n+1} - y_{n+1} = \bigg(\dfrac{1}{x_n} -a_{n+1}\bigg)- \bigg(\dfrac{1}{y_n} -a_{n+1}\bigg) = -\dfrac{x_n-y_n}{x_n{y_n}}, \hspace{2pc} n \ge 0,\] 
so that
\begin{equation}\label{Psi_theta} 
\Psi(x_n,y_n) = (\theta_{n-1}, \theta_n).
\end{equation}
We call $(\theta_{n-1}, \theta_n)$ the {\bf Jagger Pair} of $x_0$ at time $n$. We also denote the image $\Psi(\Omega)$ by $\Gamma$. Then
\begin{prop}\label{triangle}
The set $\Gamma$ is the open region interior to the triangle in $\R^2$ with vertices $(0,0), (1,0)$ and $(1,0)$.
\end{prop}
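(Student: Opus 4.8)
The plan is to prove the two set inclusions $\Psi(\Omega)\subseteq\Delta$ and $\Delta\subseteq\Psi(\Omega)$, where $\Delta$ denotes the open triangle with vertices $(0,0)$, $(1,0)$ and $(0,1)$, that is, the set $\{(u,v): u>0,\ v>0,\ u+v<1\}$. Throughout I would write $(u,v):=\Psi(x,y)$, so that $u=\frac{1}{x-y}$ and $v=-\frac{xy}{x-y}$.

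For the forward inclusion, fix $(x,y)\in\Omega$, so $x\in(0,1)$ and $y<-1$. Then $x-y>1>0$, which forces $u\in(0,1)$, and since $xy<0$ we get $v>0$ immediately. The only inequality requiring care is $u+v<1$. Here I would compute $u+v=\frac{1-xy}{x-y}$ and note, since $x-y>0$, that the desired bound is equivalent to $1-xy<x-y$, which rearranges into the factored form $(1-x)(1+y)<0$. This is immediate from $1-x>0$ and $1+y<0$, so $\Psi(\Omega)\subseteq\Delta$.

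The substantive half is surjectivity onto $\Delta$. Given $(u,v)\in\Delta$, I would recover a preimage by solving the system $x-y=\frac{1}{u}$ and $xy=-\frac{v}{u}$ read off from the defining formulas. Setting $t:=-y$, this becomes $x+t=\frac{1}{u}$ and $xt=\frac{v}{u}$, so $x$ and $t$ are the two roots of $w^2-\frac{1}{u}w+\frac{v}{u}=0$, namely $w_{\pm}=\frac{1\pm\sqrt{1-4uv}}{2u}$. Three observations then close the argument: first, the discriminant is positive, because AM--GM together with $u+v<1$ gives $uv\le\left(\frac{u+v}{2}\right)^2<\frac14$, so the roots are real and distinct; second, both roots are positive, since their sum $\frac{1}{u}$ and product $\frac{v}{u}$ are positive; and third, exactly one root lies below $1$ and the other above it, because the upward-opening quadratic $g(w):=w^2-\frac{1}{u}w+\frac{v}{u}$ satisfies $g(1)=\frac{(u+v)-1}{u}<0$, which places $1$ strictly between the two roots. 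Taking $x:=w_-$, the smaller root in $(0,1)$, and $y:=-w_+<-1$ yields a point of $\Omega$ with $\Psi(x,y)=(u,v)$, establishing $\Delta\subseteq\Psi(\Omega)$ and hence $\Gamma=\Delta$.

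I expect the main obstacle to be this surjectivity step, specifically confirming that the solved-for preimage genuinely lands in $\Omega$ rather than merely in $\R^2$. The forward inclusion is routine once the factorization $(1-x)(1+y)$ is spotted, but the inverse requires the coordinated use of the discriminant bound coming from $u+v<1$, the signs of the root sum and product, and the sign of $g(1)$, all working together to guarantee that precisely one root sits in $(0,1)$ and the other in $(1,\infty)$. (The statement as written lists the vertex $(1,0)$ twice; I read the intended third vertex as $(0,1)$, consistent with the region $u+v<1$, $u,v>0$ just described.)
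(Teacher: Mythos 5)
Your proof is correct, and it takes a genuinely different route from the paper's. The paper exhausts $\Omega$ by the rectangles $\Omega_k=(\tfrac 1k,1)\times(-k,-1)$, computes explicitly where $\Psi$ sends each of the four boundary segments of $\Omega_k$ (each goes to a line segment in the $uv$-plane, forming a quadrilateral with vertices $(\tfrac12,\tfrac12)$, $(\tfrac{k}{k+1},\tfrac{1}{k+1})$, $(\tfrac{1}{k+1},\tfrac{k}{k+1})$ and $(\tfrac{k}{k^2+1},\tfrac{k}{k^2+1})$), declares $\Psi(\Omega_k)$ to be the open region interior to that image, and lets $k\to\infty$. You instead prove the two inclusions directly: the forward one via the factorization $1-xy-(x-y)=(1-x)(1+y)<0$, and the reverse one by solving the symmetric-function system $x+(-y)=\tfrac1u$, $x(-y)=\tfrac vu$ and locating the two roots of $w^2-\tfrac1uw+\tfrac vu$ on either side of $1$ using the discriminant bound $4uv<(u+v)^2<1$ together with the sign of $g(1)=\tfrac{(u+v)-1}{u}$. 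Your argument is the more self-contained of the two: the paper's step asserting that the image of an open rectangle is the open region bounded by the image of its boundary is a nontrivial topological claim (it needs injectivity of $\Psi$, which the paper only establishes in the \emph{following} lemma, plus something like invariance of domain), whereas your computation needs nothing beyond the quadratic formula. Moreover, your surjectivity step produces exactly the formula for $\Psi^{-1}$ that the paper proves separately afterward, so your route would let the proposition and the subsequent lemma be established in one pass. What the paper's approach buys is an explicit geometric picture of how the quadrilaterals $\Gamma_k$ fill out the triangle, which is of some independent illustrative value. Your reading of the duplicated vertex $(1,0)$ as the intended $(0,1)$ is the right one.
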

\begin{proof}
For every positive integer $k \ge 2$, define the open region $\Omega_k := (\frac{1}{k},1) \times (-k,-1)$, whose boundary contains the open line segments $(\frac{1}{k},1) \times \{-1\}, \{1\} \times (-k, -1), \left(\frac{1}{k},1\right) \times \{-k\}$ and $\{\frac{1}{k}\} \times (-k,-1)$. Since $\Psi$ is continuous, $\Gamma_k := \Psi(\Omega_k)$ is the open region interior to the image of the boundary for $\Omega_k$ under $\Psi$, which will now find explicitly.\\

\noindent From the definition \eqref{Psi} of $\Psi$, we have 
\begin{equation}\label{x=uy}
x = \frac{1}{u} + y,
\end{equation}
and
\begin{equation}\label{v=uxy}
v =  -\frac{x{y}}{x-y} = -u{x}y.
\end{equation}
Set $y :=-1$ and $x \in \left(\frac{1}{k},1\right)$ so that, by the definition \eqref{Psi} of $\Psi$, we have $u = \frac{1}{x-y} = \frac{1}{x+1} \in \left(\frac{1}{2},\frac{k}{k+1}\right)$. Formulas \eqref{x=uy} and \eqref{v=uxy} now yield $v = {u}\left(\frac{1}{u} - 1  \right) = 1 - u \in \left(\frac{1}{2},\frac{1}{k+1} \right)$. Conclude that $\Psi$ maps the open line segment $(\frac{1}{k},1) \times \{-1\}$ in the $x{y}$-plane to the open line segment between the points $\left(\frac{1}{2},\frac{1}{2}\right)$ and $\left(\frac{k}{k+1},\frac{1}{k+1}\right)$ in the $u{v}$-plane.\\   

\noindent Set $x := 1$ and $y \in (-k,-1)$ so that, by definition \eqref{Psi} of $\Psi$, we have $u = \frac{1}{1-y} \in \left(\frac{1}{2}, \frac{1}{k+1}\right)$. Formulas \eqref{x=uy} and \eqref{v=uxy} now yield $v = {u}\left(\frac{1}{u} - 1  \right) = 1 - u \in \left(\frac{1}{2},\frac{k}{k+1}\right)$. Conclude that $\Psi$ maps the open line segment $\{1\} \times (-k,-1)$ in the $x{y}$-plane to the open line segment between the points $\left(\frac{1}{2},\frac{1}{2}\right)$ and $\left(\frac{1}{k+1},\frac{k}{k+1}\right)$ in the $u{v}$-plane.\\  

\noindent Set $y :=-k$  and $x \in \left(\frac{1}{k}, 1 \right)$, so that, by the definition \eqref{Psi} of $\Psi$, we have $u = \frac{1}{x-y} = \frac{1}{x+k} \in \left(\frac{1}{k+1}, \frac{k}{k^2+1}\right)$. Formulas \eqref{x=uy} and \eqref{v=uxy} now yield $v = -u\left(\frac{1}{u} - k\right)(-k)= k - k^2{u} \in \left(\frac{k}{k^2+1},\frac{k}{k+1}\right)$. Conclude that $\Psi$ maps the open line segment $(0,1) \times \{-k\}$ $x{y}$-plane to the open line segment between the points $\left(\frac{1}{k+1},\frac{k}{k+1}\right)$ and $\left(\frac{k}{k^2+1},\frac{k}{k^2+1}\right)$ in the $u{v}$-plane.\\  

\noindent Finally, set $x :=\frac{1}{k}$ and $y \in (-k,-1)$, so that, by the definition \eqref{Psi} of $\Psi$, we have $u \in \left(\frac{k}{k^2+1}, \frac{k}{k+1}\right)$. Formulas \eqref{x=uy} and \eqref{v=uxy} now yield $v = \frac{u}{k}\left(\frac{1}{u} - \frac{1}{k}\right) =\frac{1}{k} - \frac{u}{k^2} \in \left(\frac{k}{k^2+1},\frac{k}{k+1}\right)$. Conclude that $\Psi$ maps the open line segment $\big\{\frac{1}{k}\big\} \times (-k,-1)$ in the $x{y}$-plane to the open line segment between the points $\big(\frac{k}{k^2+1},\frac{k}{k^2+1}\big)$ and $\big(\frac{k}{k+1},\frac{1}{k+1}\big)$ in the $u{v}$-plane. From the continuity of $\Psi$, we have  
\[\Gamma = \Psi(\Omega) = \Psi\left(\bigcup_2^\infty\Omega_k\right)= \bigcup_2^\infty\Psi(\Omega_k) = \bigcup_2^\infty\Gamma_k.\] 
Therefore, we conclude the desired result after letting $k \to \infty$.\\ 

\end{proof}

\noindent Note that since $(\theta_{n-1},\theta_n)=\Psi(x_n,y_n) \in \Gamma$, this observation is in accordance with formula \eqref{max=1}. 
\begin{lemma}
The map $\Psi:\Omega \to \Gamma$ is a homeomorphism with inverse:
\begin{equation}\label{Psi_inv}
\Psi^{-1}(u,v) := \bigg(\dfrac{1-\sqrt{1-4{u}v}}{2u}, -\dfrac{1+ \sqrt{1-4{u}v}}{2u}\bigg),  
\end{equation}
\end{lemma}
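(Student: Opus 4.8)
The plan is to verify that formula \eqref{Psi_inv} defines a genuine two-sided inverse of $\Psi$ on $\Gamma$; the homeomorphism claim then follows immediately from the continuity of both maps, since $\Psi$ was already seen to be continuous and the formula \eqref{Psi_inv} is manifestly continuous wherever the radicand is positive and $u \neq 0$. The crucial observation is that inverting $\Psi$ is nothing more than solving a quadratic. Writing $(u,v) = \Psi(x,y)$ and reading off the two coordinates in \eqref{Psi}, one gets $x - y = \frac{1}{u}$ and $-xy = \frac{v}{u}$; equivalently, $x$ and $-y$ satisfy $x + (-y) = \frac{1}{u}$ and $x\cdot(-y) = \frac{v}{u}$, so they are exactly the two roots of $u z^2 - z + v = 0$, namely $z = \frac{1 \pm \sqrt{1-4uv}}{2u}$.

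Two points remain: that the radicand $1 - 4uv$ is nonnegative, and which root is $x$ and which is $-y$. For the first, a short computation gives
\[1 - 4uv = 1 + \frac{4xy}{(x-y)^2} = \frac{(x+y)^2}{(x-y)^2} \ge 0,\]
so on the image the discriminant is automatically a perfect square. To see it is strictly positive on all of $\Gamma$, I would invoke Proposition \ref{triangle}: points of the open triangle satisfy $u,v > 0$ and $u + v < 1$, whereas the hyperbola $uv = \frac{1}{4}$ obeys $u + v \ge 2\sqrt{uv} = 1$ by the arithmetic--geometric mean inequality, with equality only at $\left(\frac{1}{2},\frac{1}{2}\right)$. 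Hence $uv < \frac{1}{4}$ throughout $\Gamma$, i.e. $1 - 4uv > 0$, and the two roots are real and distinct.

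For the branch, membership $(x,y) \in \Omega$ forces $x < 1 < -y$, so $x$ is the smaller and $-y$ the larger root; as $u > 0$ this means $x = \frac{1-\sqrt{1-4uv}}{2u}$ and $-y = \frac{1+\sqrt{1-4uv}}{2u}$, which is precisely \eqref{Psi_inv}. To complete the argument I would confirm that \eqref{Psi_inv} maps $\Gamma$ back into $\Omega$ (producing $x \in (0,1)$ and $y < -1$) and that its composition with $\Psi$ returns the identity, the latter being the routine substitution already encoded in the root computation above. I expect the main obstacle to be the branch bookkeeping rather than the algebra: one must justify both that $1 - 4uv > 0$ on the whole of $\Gamma$ and that the ordering $x < -y$ forces the minus sign in the $x$-coordinate. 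Once these are settled, the explicit formula falls out and continuity of $\Psi$ and $\Psi^{-1}$ delivers the homeomorphism.
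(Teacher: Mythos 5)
Your proposal is correct, but it reaches the inverse formula by a genuinely different route than the paper. You treat $x$ and $-y$ as the two roots of the quadratic $uz^2 - z + v = 0$ obtained from Vieta's relations $x + (-y) = \tfrac{1}{u}$ and $x\cdot(-y) = \tfrac{v}{u}$, which simultaneously proves injectivity, derives (rather than merely verifies) formula \eqref{Psi_inv}, and reduces the branch choice to the observation $x < 1 < -y$. The paper instead splits the work into three separate steps: an injectivity argument via $(x_1+y_1)^2 = (x_2+y_2)^2$ together with the sign constraint $x+y<0$ (which is really the same Vieta algebra in disguise); a direct verification that the displayed formula is well-defined and lands in $\Omega$ (checking $x_0\in(0,1)$ and $y_0<-1$ by explicit inequalities, including a proof by contradiction for $x_0<1$); and a computation that $\Psi\circ\Psi^{-1}=\operatorname{id}$. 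Your organization makes the last two steps nearly automatic, since every point of $\Gamma=\Psi(\Omega)$ has a preimage in $\Omega$ by definition and the root analysis identifies it uniquely with the formula's output. Your positivity argument for the discriminant is also cleaner: the paper argues that $\Gamma$ lies below the hyperbola $4uv=1$ because that hyperbola meets the line $u+v=1$ only at $(\tfrac12,\tfrac12)$ (a somewhat hand-waved geometric step), whereas your AM--GM argument ($uv\ge\tfrac14$ and $u,v>0$ would force $u+v\ge 1$, contradicting Proposition \ref{triangle}) is airtight in one line. Both approaches ultimately lean on Proposition \ref{triangle} for the inequality $u+v<1$, so neither is more self-contained; yours is shorter and more conceptual, the paper's is more explicitly computational.
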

\begin{proof} 
First, we will show that $\Psi$ is a bijection. Since the map $\Psi$ is surjective onto its image $\Gamma$, we need only show injectiveness. Let $(x_1,y_1), (x_2,y_2)$ be two points in $\Omega$ such that
\[ \bigg(\dfrac{1}{x_1-y_1}, -\dfrac{x_1{y_1}}{(x_1-y_1)}\bigg) = \Psi(x_1,y_1) = \Psi(x_2,y_2) =  \bigg(\dfrac{1}{x_2-y_2}, -\dfrac{x_2{y_2}}{(x_2-y_2)}\bigg).\]  
By equating the first and then the second components of the exterior terms, we obtain that
\begin{equation}\label{x-y}
x_1 - y_1 = x_2 - y_2 
\end{equation}
and then, that $x_1{y_1} = x_2{y_2}$. Therefore,
\[(x_1 +y_1)^2 = (x_1 - y_1)^2 + 4x_1y_1 = (x_2 - y_2)^2 + 4x_2y_2 = (x_2 + y_2)^2.\]
Since both these points are in $\Omega$ they must lie below the line $x + y = 0$, hence $x_1 + y_1 = x_2 + y_2 < 0$. Another application of condition \eqref{x-y} now proves that $x_1=x_2$ and $y_1=y_2$, hence $\Psi$ is injective.\\

\noindent Since both $\Psi$ and $\Psi^{-1}$ are clearly continuous, it is left to prove that $\Psi^{-1}$ is well-defined and that it is the inverse for $\Psi$. Given $(u_0,v_0) \in \Gamma$, set 
\[(x_0,y_0) := \Psi^{-1}(u_0,v_0) = \bigg(\dfrac{1-\sqrt{1-4{u_0}v_0}}{2u_0}, -\dfrac{1+ \sqrt{1-4{u_0}v_0}}{2u_0}\bigg).\] 
From proposition \ref{triangle}, we know that $\Gamma$ lies entirely underneath the line $u+v=1$ in the $u{v}$ plane. The only point of intersection for this line and the hyperbola $4{u}v=1$ is the point $(u,v) = \big(\frac{1}{2},\frac{1}{2}\big)$, hence $\Gamma$ must lie underneath this hyperbola as well. We conclude $4{u_0}v_0 < 1$, so that both $x_0$ and $y_0$ must be real. Another implication of the inequality $u+v < 1$ is that $4{u}v < 4u - 4u^2$, hence $1 - 4u{v} >  4u^2 - 4u + 1 = (2u-1)^2$. Conclude that $1 + \sqrt{1 - 4u{v}} > 2u$ so we must have $y_0 = - \frac{1+\sqrt{1 - 4u_0{v_0}}}{2u_0} < -1$.\\

 \noindent To prove that $x_0\in (0,1)$, we first observe that $\sqrt{1-4{u_0}v_0} < 1$ implies that $1 - \sqrt{1 - 4u_0{v_0}} > 0$, hence $x_0$ is positive. If we further assume by contradiction that $x_0 \ge 1$, then the definition of $\Psi^{-1}$ \eqref{Psi_inv} will imply the inequality
\[1+\sqrt{1 - 4u_0{v_0}} \le x_0\left(1+\sqrt{1 - 4u_0{v_0}}\right) = \frac{1}{2u_0}\left(1-\sqrt{1 - 4u_0{v_0}}\right)\left(1+\sqrt{1 - 4u_0{v_0}}\right) = 2v_0\] 
so that we obtain the inequality
\[4v_0^2 - 4v_0 + 1 = (2v_0-1)^2 \ge 1-4u_0{v_0}.\] 
After the appropriate cancellations and rearrangements, we obtain the inequality $u_0 +v_0 \ge 1$, which is in contradiction to proposition \ref{triangle}. Conclude that $(x_0,y_0) \in \Omega$ and $\Psi^{-1}:\Gamma \to \Omega$ is well-defined.


 
\noindent Finally, we will show that $\Psi^{-1}$ is the inverse for $\Psi$. Let $(u,v) \in \Gamma$ and set $(x,y) := \Psi^{-1}(u,v) \in \Omega$. Using the definitions \eqref{Psi} and \eqref{Psi_inv} of $\Psi$ and $\Psi^{-1}$, the first component of $\Psi(x,y)$ is 
\[\dfrac{1}{x-y} = \bigg(\frac{1 - \sqrt{1 - 4u{v}}}{2u} + \dfrac{1 + \sqrt{1 - 4u{v}}}{2u}\bigg)^{-1} = \left(\dfrac{2}{2u}\right)^{-1} = u\] and its second component is 
\[-\dfrac{x{y}}{(x-y)} = -{u}(x{y}) = u\bigg(\dfrac{1}{4u^2}\big(1 - \sqrt{1 - 4{u}v}^2\big)\bigg) = \dfrac{1}{4u}\cdot4u{v} = v,\]
hence $\Psi^{-1}$ is the right inverse for $\Psi$. Since $\Psi$ is a bijection, we conclude it is the (two-sided) inverse for $\Psi$, completing the proof.

\end{proof}

\section{Result}

\begin{thm}\label{result}
Let $x_0$ be an irrational number in the unit interval and let $n \in \N$. If $a_{n+1}$ is the digit at time $n+1$ in the continued fraction expansion for $x_0$ and if $(\theta_{n-1},\theta_n,\theta_{n+1})$ are the approximation coefficients for $x_0$ at time $n-1,n$ and $n+1$, then
\begin{equation}\label{digit}
a_{n+1} =  \bigg\lfloor \frac{1+ \sqrt{1-4\theta_{n-1}\theta_n}}{2\theta_n} \bigg\rfloor =\bigg\lfloor \frac{1+ \sqrt{1-4\theta_{n+1}\theta_n}}{2\theta_n} \bigg\rfloor.
\end{equation} 
\end{thm}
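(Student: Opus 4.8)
The plan is to extract the digit $a_{n+1}$ directly from the dynamical coordinates $(x_n,y_n)$ and then rewrite those coordinates in terms of the approximation coefficients by means of the inverse map $\Psi^{-1}$ furnished by the preceding lemma. Everything rests on two elementary readings of the digit coming from the iteration scheme that defines the continued fraction expansion. Reading $a_{n+1}$ from the \emph{future}: since the remainder satisfies $r_{n+1}=1/x_n$ and $a_{n+1}=\lfloor r_{n+1}\rfloor$, we have $a_{n+1}=\lfloor 1/x_n\rfloor$. Reading $a_{n+1}$ from the \emph{past}: the definition \eqref{y_n} gives $-y_{n+1}=a_{n+1}+[a_n,a_{n-1},\ldots,a_1]$, so $a_{n+1}=\lfloor -y_{n+1}\rfloor$ as soon as the tail lies in $(0,1)$. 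Each claimed identity in \eqref{digit} will come from one of these two readings.

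For the first identity I would begin with \eqref{Psi_theta}, namely $\Psi(x_n,y_n)=(\theta_{n-1},\theta_n)$, and use the bijectivity from the lemma to conclude $(x_n,y_n)=\Psi^{-1}(\theta_{n-1},\theta_n)$. Feeding $u=\theta_{n-1}$, $v=\theta_n$ into \eqref{Psi_inv} reads off $x_n=\frac{1-\sqrt{1-4\theta_{n-1}\theta_n}}{2\theta_{n-1}}$. Inverting and rationalizing (multiply top and bottom by $1+\sqrt{1-4\theta_{n-1}\theta_n}$, so that the new denominator $4\theta_{n-1}\theta_n$ cancels the factor $\theta_{n-1}$) collapses this to $\frac{1}{x_n}=\frac{1+\sqrt{1-4\theta_{n-1}\theta_n}}{2\theta_n}$, and taking floors gives the first equality.

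The second identity is the mirror image, read off from the pair one tick later. Applying \eqref{Psi_theta} at time $n+1$ gives $(x_{n+1},y_{n+1})=\Psi^{-1}(\theta_n,\theta_{n+1})$, and the second coordinate of \eqref{Psi_inv} with $u=\theta_n$, $v=\theta_{n+1}$ is already in the target form, $-y_{n+1}=\frac{1+\sqrt{1-4\theta_{n+1}\theta_n}}{2\theta_n}$; taking floors gives the second equality. This is where the symmetry of the theorem becomes transparent: the first expression equals $a_{n+1}+x_{n+1}$ (digit plus future) while the second equals $a_{n+1}+[a_n,\ldots,a_1]$ (digit plus past), so the two carry the same integer part $a_{n+1}$ and differ only in their fractional parts.

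The arithmetic is short, and I expect the genuine obstacle to be the legitimacy of the two floor operations, i.e. verifying that the relevant fractional parts truly lie in $(0,1)$. For the future reading this is free, since $x_{n+1}=T(x_n)$ is irrational in $(0,1)$ and $1/x_n$ is consequently never an integer. For the past reading one must check $[a_n,\ldots,a_1]\in(0,1)$, which holds for all $n\ge 2$ and for $n=1$ precisely when $a_1\ge 2$; the single degenerate parameter $n=1,\ a_1=1$ forces $[a_1]=1$ onto the boundary and must be handled on its own. The same degeneracy also places $(x_1,y_1)$ on $\partial\Omega$ rather than inside $\Omega$, so there one should lean on the continuity of the explicit formula \eqref{Psi_inv} instead of on the homeomorphism statement itself.
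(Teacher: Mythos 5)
Your argument is correct and is essentially the paper's own proof: both substitute the explicit inverse \eqref{Psi_inv} into \eqref{Psi_theta} and read the digit off as $\lfloor 1/x_n\rfloor$ (future) and $\lfloor -y_{n+1}\rfloor$ (past); the only cosmetic difference is that the paper proves $a_n=\big\lfloor(1+\sqrt{1-4\theta_{n-1}\theta_n})/(2\theta_{n-1})\big\rfloor$ from the second coordinate at time $n$ and then ``adds one to all indices,'' whereas you work directly with $(x_{n+1},y_{n+1})=\Psi^{-1}(\theta_n,\theta_{n+1})$. The degenerate case you flag at the end is real, and the paper passes over it in silence: when $n=1$ and $a_1=1$ we have $[a_1]=1$, so $-y_2=a_2+[a_1]=a_2+1$ is an integer and the second floor in \eqref{digit} returns $a_2+1$ rather than $a_2$. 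This is not merely a gap in the argument but a failure of the identity itself --- for $x_0=[1,1,1,\dots]$ one computes $\theta_1=\tfrac{3-\sqrt5}{2}$, $\theta_2=2\sqrt5-4$, and $\big(1+\sqrt{1-4\theta_2\theta_1}\big)/(2\theta_1)=2$ exactly, while $a_2=1$ --- so it cannot be repaired by a continuity argument; the second equality in \eqref{digit} needs the hypothesis $n\ge2$ or $a_1\ge2$. The first equality is unaffected, since $x_{n+1}\in(0,1)$ is always irrational, as you note.
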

\begin{proof}
Let $(x_n,y_n)$ be the dynamic pair of $x_0$ at time $n$. Formula \eqref{Psi_theta}, the fact that $\Psi$ is a homeomorphism and the definition \eqref{Psi_inv} of $\Psi^{-1}$ yield
\begin{equation}\label{present_digits}
(x_n,y_n) =  \Psi^{-1}(\theta_{n-1},\theta_n) = \bigg(\dfrac{1-\sqrt{1 - 4\theta_{n-1}\theta_n}}{2\theta_{n-1}}, -\dfrac{1+\sqrt{1 - 4\theta_{n-1}\theta_n}}{2\theta_{n-1}}\bigg).
\end{equation}
Using formula \eqref{r}, we write $x_n = [a_{n+1},r_{n+2}] = \frac{1}{a_{n+1}+[r_{n+2}]}$, so that the first components in the exterior terms of formula \eqref{present_digits} equate to 
\[a_{n+1} + [r_{n+2}] = \frac{2\theta_{n-1}}{1-\sqrt{1 - 4\theta_{n-1}\theta_n}} = \frac{1+ \sqrt{1-4\theta_{n-1}\theta_n}}{2\theta_n}.\] 
But since $[r_{n+2}] = x_{n+1} <1$, we have 
\[a_{n+1} = \big\lfloor a_{n+1} + [r_{n+2}] \big\rfloor = \bigg\lfloor \frac{1+ \sqrt{1-4\theta_{n-1}\theta_n}}{2\theta_n} \bigg\rfloor,\] 
which is the first equality in the equations \eqref{digit}.\\

\noindent Next, we equate the second components in the exterior terms of formula \eqref{present_digits},which, after using formula \eqref{y_n}, yields 
\[a_n + [a_{n-1},...,a_1] = \frac{1+\sqrt{1 - 4\theta_{n-1}\theta_n}}{2\theta_{n-1}}.\] 
But since $[a_{n-1},...,a_1] <1$, we conclude
\[a_n = \big\lfloor a_n + [a_{n-1},...,a_1] \big\rfloor = \bigg\lfloor  \dfrac{1+\sqrt{1 - 4\theta_{n-1}\theta_n}}{2\theta_{n-1}} \bigg\rfloor.\] 
Adding one to all indices establishes the equality of the exterior terms in the equations \eqref{digit} and completes the proof. 
\end{proof}

\noindent As a direct consequence of this theorem and formula \eqref{theta_n_pm_1}, we obtain:
\begin{cor}\label{result_cor}
Assuming the hypothesis of the theorem, we have
\[\theta_{n \pm 1} = \theta_{n \mp 1} +  \bigg\lfloor \frac{1 + \sqrt{1-4{\theta_{n \mp 1}}\theta_n}}{2\theta_n} \bigg\rfloor\sqrt{1 - 4{\theta_{n \mp 1}}\theta_n} -  \bigg\lfloor \frac{1 + \sqrt{1-4{\theta_{n \mp 1}}\theta_n}}{2\theta_n} \bigg\rfloor^2\theta_n.\]
\end{cor}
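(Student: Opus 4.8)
The plan is to prove the corollary by straightforward substitution, exactly as the surrounding text advertises: formula \eqref{theta_n_pm_1} already expresses $\theta_{n\pm1}$ in terms of $\theta_{n\mp1}$, $\theta_n$, and the single integer $a_{n+1}$, and Theorem \ref{result} expresses that integer purely in terms of the Jager pair. So I would simply eliminate $a_{n+1}$ from \eqref{theta_n_pm_1} by inserting the floor formula \eqref{digit}, replacing both the linear occurrence $a_{n+1}$ and the quadratic occurrence $a_{n+1}^2$. There is no analytic content beyond this; the entire task is algebraic substitution into an identity that has already been established.

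The one point demanding care is matching the $\pm/\mp$ bookkeeping. Theorem \ref{result} supplies \emph{two} expressions for $a_{n+1}$, one built from $\theta_{n-1}$ and one from $\theta_{n+1}$, and I must feed \eqref{theta_n_pm_1} the one whose radicand agrees with the $\theta_{n\mp1}$ already sitting under the square root in \eqref{theta_n_pm_1}. Concretely, reading \eqref{theta_n_pm_1} with the top sign gives $\theta_{n+1}=\theta_{n-1}+a_{n+1}\sqrt{1-4\theta_{n-1}\theta_n}-a_{n+1}^2\theta_n$, whose radical carries $\theta_{n-1}=\theta_{n\mp1}$, so here I would substitute the first expression $a_{n+1}=\big\lfloor\frac{1+\sqrt{1-4\theta_{n-1}\theta_n}}{2\theta_n}\big\rfloor$. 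Reading \eqref{theta_n_pm_1} with the bottom sign gives $\theta_{n-1}=\theta_{n+1}+a_{n+1}\sqrt{1-4\theta_{n+1}\theta_n}-a_{n+1}^2\theta_n$, whose radical carries $\theta_{n+1}=\theta_{n\mp1}$, so here I would substitute the second expression, the one with $\theta_{n+1}$. In both readings the radicand inside the floor is forced to coincide with the $\theta_{n\mp1}$ appearing elsewhere, which is precisely why the displayed identity can be written uniformly with $\theta_{n\mp1}$ in every slot.

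Having chosen the correct representative, both occurrences of $a_{n+1}$ in \eqref{theta_n_pm_1} are replaced by the same quantity $\big\lfloor\frac{1+\sqrt{1-4\theta_{n\mp1}\theta_n}}{2\theta_n}\big\rfloor$, and collecting terms reproduces the corollary verbatim. The range is consistent: since $\N=\mathbb{Z}\cap[1,\infty)$, both Theorem \ref{result} and \eqref{theta_n_pm_1} are in force for every admissible $n$, so no boundary case needs separate treatment. I expect the only genuine obstacle to be the sign tracking described above—ensuring the floor's radicand is the same $\theta_{n\mp1}$ that multiplies $a_{n+1}$ under the root—since a mismatched choice of representative would still yield a numerically correct value for $\theta_{n\pm1}$ but would fail to display the claimed symmetric $\theta_{n\mp1}$ form that is the whole point of the statement.
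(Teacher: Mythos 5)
Your proposal is correct and matches the paper exactly: the paper offers no separate proof, stating only that the corollary is a direct consequence of Theorem \ref{result} and formula \eqref{theta_n_pm_1}, which is precisely the substitution you carry out. Your extra care with the $\pm/\mp$ bookkeeping, ensuring the radicand in the floor matches the $\theta_{n\mp1}$ under the square root, is a sound and welcome elaboration of that one-line justification.
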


\section{Acknowledgments}

This paper is a development of part of the author's Ph.D. dissertation at the University of Connecticut. Benefiting tremendously from the patience and rigor his advisor Andrew Haas, he would like to thank him for all his efforts. In addition, he would like to extend his gratitude to Alvaro Lozano-Robledo for his suggestions and corrections.


\end{document}